\documentclass[12pt]{amsart}

\usepackage{amssymb,mathtools}
\usepackage{hyperref,cite}
\usepackage[paper=a4paper,left=3cm,right=3cm,top=3cm,bottom=3cm]{geometry}
\usepackage{setspace}

\numberwithin{equation}{section}

\newtheorem{theorem}{Theorem}[section]
\newtheorem{corollary}[theorem]{Corollary}
\newtheorem{proposition}[theorem]{Proposition}
\theoremstyle{remark}
\newtheorem{remark}[theorem]{Remark}
\newtheorem{example}[theorem]{Example}

\newcommand{\Fq}{\mathbb F_q}
\newcommand{\R}{\mathbb R}
\newcommand{\Prob}{\mathbb P}
\newcommand{\E}{\mathbb E}
\newcommand{\Ind}{\mathcal I}
\newcommand{\Aut}{\operatorname{Aut}}
\newcommand{\rank}{\operatorname{rank}}
\newcommand{\PG}{\operatorname{PG}}
\newcommand{\myqed}{\hfill $\Diamond$}

\title{Maximum Probability of Independence in Transitive Matroids}

\author{Mladen Kova\v{c}evi\'{c}}

\thanks{The author is with the University of Novi Sad, Serbia.
Email: kmladen@uns.ac.rs.}
\thanks{This work was supported by the Ministry of Science, Technological Development
and Innovation of the Republic of Serbia (contract no. 451-03-34/2026-03/200156)
and by the Faculty of Technical Sciences, University of Novi Sad, Serbia (project
no. 01-3609/1).}

\date{\today}

\subjclass[2020]{05B35, 05B25, 15B52, 60C05}
\keywords{Transitive matroid, completely log-concave polynomial, random matrices over finite fields, projective geometry, full-rank probability}

\begin{document}

\begin{abstract}
Let $M$ be a matroid on a finite ground set $E$, and suppose that the automorphism group of $M$ acts transitively on $E$.
We show the following: if $X_1,\ldots,X_K$ are sampled independently from a distribution $p$ on $E$, then the probability that the samples are distinct and that $\{X_1,\ldots,X_K\}$ is an independent set in $M$ is quasi-concave in $p$ and maximized when $p$ is uniform.
As a corollary, for a random $K\times N$ matrix over a finite field whose rows are sampled independently from an arbitrary distribution on nonzero projective row classes, the uniform distribution on projective space maximizes the probability of full row rank.
In this particular case we also establish the uniqueness of the maximizer and global quadratic stability, while a simple example illustrates that uniqueness and stability need not hold for arbitrary transitive matroids.
\end{abstract}

\maketitle

\section{Introduction}\label{sec:introduction}

Let $M$ be a finite matroid on a ground set $E$, with independent sets $\Ind(M)$~\cite{Oxley2011}. Let $X_1,\ldots,X_K\in E$ be i.i.d.\ samples from a probability distribution
\begin{equation}\label{eq:probability-simplex}
  p=(p_e)_{e\in E}\in \Delta(E)
  \coloneq\left\{x\in \R_{\ge 0}^{E} \colon \sum_{e\in E}x_e=1\right\}.
\end{equation}
We are interested in the probability
\begin{equation}
\label{eq:independence-probability-intro}
  F_{M,K}(p)
  \coloneq
  \mathbb P\bigl(|\{X_1,\ldots,X_K\}|=K
  \text{ and } \{X_1,\ldots,X_K\}\in\mathcal I(M)\bigr).
\end{equation}
Note that the event in \eqref{eq:independence-probability-intro} implies that the samples are distinct, so one can write
\begin{equation}\label{eq:independence-polynomial-relation-intro}
  F_{M,K}(p)
  =K!\sum_{\substack{S\in \Ind(M)\\ |S|=K}}\prod_{e\in S}p_e.
\end{equation}

We show that the uniform distribution maximizes $F_{M,K}$ when $M$ is transitive. In other words, among all i.i.d.\ sampling distributions on the ground set of a
transitive matroid, the uniform distribution maximizes the probability of
obtaining an independent \(K\)-set. We also study the corresponding uniqueness and stability questions.
The proof uses the complete log-concavity of matroid basis-generating
polynomials due to Anari, Oveis Gharan, and Vinzant~\cite{AnariOveisGharanVinzant2021}. The same conclusion also follows from the Lorentzian-polynomial framework of Br\"and\'en and Huh~\cite{BrandenHuh2020}, in which matroid
basis-generating polynomials form a central class of examples.

Our motivating example, and a special case that we analyze separately, is the projective geometry matroid $\PG(N-1,q)$. If rows of a random matrix over a finite field $\Fq$ are sampled from nonzero row vectors, scalar multiples are indistinguishable for rank purposes. Thus the natural state space is the set of projective points
\begin{equation}\label{eq:projective-space-size}
  E=\PG(N-1,q),
  \qquad
  |E|=\frac{q^N-1}{q-1}.
\end{equation}
The event that $K$ sampled projective points are independent is precisely the event that the corresponding $K\times N$ matrix has full row rank.

Rank distributions of random matrices over finite fields have a long history, including classical and asymptotic results for uniformly distributed matrices and for sparse or biased models; see, for example, \cite{Cooper2000Distribution,Cooper2000Rank,Fulman2002,BlomerKarpWelzl1997,SalmondGrantGrivellChan2014}. The full-rank probability for arbitrary row distributions over finite fields was studied in \cite{DelicIvetic2025}, proving local optimality of the uniform distribution for $K\le N$ and global optimality for $K=2$. Corollary~\ref{cor:finite-field-full-rank} below establishes global optimality for all $K\le N$ after passing to projective row classes.

\section{Matroid polynomials and concavity}\label{sec:matroid-polynomials}

For $K\ge 0$, define the degree-$K$ independent-set generating polynomial
\begin{equation}\label{eq:fk-definition}
  f_{M,K}(x)
  \coloneq\sum_{\substack{S\in \Ind(M)\\ |S|=K}}\prod_{e\in S}x_e,
  \qquad x=(x_e)_{e\in E} .
\end{equation}
When $K\le \rank(M)$, this is the basis-generating polynomial of the rank-$K$ truncation of $M$.

\begin{theorem}[Matroid polynomial concavity]\label{thm:matroid-polynomial-concavity}
Let $M$ be a matroid and let $1\le K\le \rank(M)$. Then the function
\begin{equation}
\label{eq:hmk-definition}
  h_{M,K}(x)\coloneq f_{M,K}(x)^{1/K}
\end{equation}
is concave on $\R_{\ge 0}^{E}$.
Consequently, $f_{M,K}$ is quasi-concave on $\R_{\ge 0}^{E}$, i.e., for every $a\ge 0$, the superlevel set
\begin{equation}\label{eq:superlevel-set}
  \left\{x\in \R_{\ge 0}^{E}\colon f_{M,K}(x)\ge a\right\}
\end{equation}
is convex.
\end{theorem}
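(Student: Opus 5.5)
The plan is to deduce the statement from the complete log-concavity of matroid basis-generating polynomials \cite{AnariOveisGharanVinzant2021}, which can be taken as a black box. First we note that for $1\le K\le\rank(M)$ the rank-$K$ truncation $T_K(M)$ is a matroid whose bases are exactly the independent $K$-sets of $M$. Hence $f_{M,K}$ is the basis-generating polynomial of $T_K(M)$. It is homogeneous of degree $K$, has nonnegative coefficients, and is not identically zero, since an independent $K$-set exists.

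By \cite{AnariOveisGharanVinzant2021}, or equivalently the Lorentzian property of \cite{BrandenHuh2020}, $f_{M,K}$ is completely log-concave on $\R_{\ge 0}^E$; in particular $\log f_{M,K}$ is concave on the open orthant. The key step is the standard fact that a positive, degree-$K$ homogeneous function $f$ that is log-concave on a convex cone has $f^{1/K}$ concave there. To prove it, take $x,y$ in the open orthant and set $x'=x/h(x)$, $y'=y/h(y)$, where $h=f^{1/K}$, so that $f(x')=f(y')=1$. For $\lambda\in[0,1]$, put $t=\lambda h(x)/(\lambda h(x)+(1-\lambda)h(y))$. Log-concavity gives $f(tx'+(1-t)y')\ge f(x')^t f(y')^{1-t}=1$. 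By homogeneity,
\[
h(\lambda x+(1-\lambda)y)=(\lambda h(x)+(1-\lambda)h(y))\,h(tx'+(1-t)y')\ge \lambda h(x)+(1-\lambda)h(y).
\]
Alternatively one can cite directly that Lorentzian polynomials have concave $K$-th roots on the nonnegative orthant \cite{BrandenHuh2020}.

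The step needing the most care is passing from the open orthant to the boundary, where $f_{M,K}$ may vanish and $\log f_{M,K}$ is not finite. Two cases arise. If $h$ is positive at both $x$ and $y$, the homogeneity argument above applies verbatim. If, say, $h(y)=0$, then $h(\lambda x+(1-\lambda)y)\ge \lambda h(x)$ follows directly from monotonicity, since the coefficients are nonnegative, together with homogeneity: $f(\lambda x+(1-\lambda)y)\ge f(\lambda x)=\lambda^K f(x)$. A cleaner route, which avoids checking log-concavity on boundary points with positive value, is to prove concavity on the open orthant and extend by continuity of $h$ on $\R_{\ge0}^E$, since pointwise limits of the concavity inequality persist.

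The quasi-concavity claim is then immediate. For $a\ge0$, the set $\{f_{M,K}\ge a\}=\{h_{M,K}\ge a^{1/K}\}$ is a superlevel set of a concave function on a convex domain, and is therefore convex.
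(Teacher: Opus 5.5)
Your proposal is correct and follows the same route as the paper. Both identify $f_{M,K}$ as the basis-generating polynomial of the truncation $T_K(M)$, invoke the Anari--Oveis Gharan--Vinzant complete log-concavity theorem, deduce concavity of $f_{M,K}^{1/K}$ from log-concavity and degree-$K$ homogeneity, and read off convex superlevel sets. The only difference is that you prove the homogeneity step (via the rescaling $x'=x/h(x)$, $y'=y/h(y)$) and handle the boundary of the orthant by continuity or monotonicity, whereas the paper simply asserts these as standard.
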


\begin{proof}
The rank-\(K\) truncation \(T_K(M)\) is a matroid, and \(f_{M,K}\) is its
basis-generating polynomial. By the complete log-concavity theorem for
matroid basis-generating polynomials due to Anari, Oveis Gharan, and
Vinzant~\cite{AnariOveisGharanVinzant2021}, \(f_{M,K}\) is completely log-concave. For a homogeneous
polynomial \(f\) of degree \(d\) with nonnegative coefficients, complete
log-concavity implies that \(f^{1/d}\) is concave on the nonnegative
orthant. Applying this with \(d=K\) gives the concavity of \(h_{M,K}\).

Since $h_{M,K}$ is concave, its superlevel sets are convex. Since
 $ f_{M,K}(x)=h_{M,K}(x)^K $
and $s\mapsto s^K$ is increasing for $s\ge 0$, the superlevel sets of $f_{M,K}$ coincide with those of $h_{M,K}$ after reparametrizing the threshold.
\end{proof}

Combining \eqref{eq:independence-polynomial-relation-intro}, \eqref{eq:fk-definition} and \eqref{eq:hmk-definition}, we have
\begin{equation}\label{eq:F-K-factorial-f}
  F_{M,K}(p)=K!\,f_{M,K}(p)=K!\,h^K_{M,K}(p).
\end{equation}
Thus maximizing $F_{M,K}$ is equivalent to maximizing $f_{M,K}$ or $h_{M,K}$ on the simplex $\Delta(E)$.

\section{Transitive matroids}
\label{sec:transitive-matroid-theorem}

The following theorem states that the uniform distribution always maximizes the probability that $K$ i.i.d.\ samples from $E$ are distinct and form an independent set of $M$, provided $M$ is transitive.

\begin{theorem}[Uniform optimality for transitive matroids]\label{thm:transitive-matroid}
Let $M$ be a matroid on a finite ground set $E$, and let $1\le K\le \rank(M)$. Suppose that $\Aut(M)$ acts transitively on $E$. Let $u=(u_e)_{e\in E}$ with
\begin{equation}\label{eq:uniform-distribution}
  u_e\coloneq\frac{1}{|E|},
  \qquad e\in E.
\end{equation}
Then, for every $p\in \Delta(E)$,
\begin{equation}
\label{eq:transitive-optimality-F}
  F_{M,K}(p)\le F_{M,K}(u).
\end{equation}
\end{theorem}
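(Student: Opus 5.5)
The plan is a symmetrization argument: combine the concavity from Theorem~\ref{thm:matroid-polynomial-concavity} with the invariance of $f_{M,K}$ under $\Aut(M)$, then average over the group.

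First I check invariance. Let $\sigma\in\Aut(M)$ and define $(\sigma p)_e\coloneq p_{\sigma^{-1}(e)}$. Because $\sigma$ maps independent $K$-sets bijectively onto independent $K$-sets, substituting into \eqref{eq:fk-definition} gives $f_{M,K}(\sigma p)=f_{M,K}(p)$. Hence $h_{M,K}(\sigma p)=h_{M,K}(p)$ as well. The simplex $\Delta(E)$ is also preserved by $\sigma$.

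Next, fix $p\in\Delta(E)$, let $G=\Aut(M)$, and form the average
\begin{equation*}
  \bar p\coloneq\frac{1}{|G|}\sum_{\sigma\in G}\sigma p .
\end{equation*}
This is a convex combination of points of $\Delta(E)$, so $\bar p\in\Delta(E)$. For each $e\in E$ we have
\begin{equation*}
  \bar p_e=\frac{1}{|G|}\sum_{\sigma\in G}p_{\sigma^{-1}(e)} .
\end{equation*}
Since $G$ acts transitively, orbit--stabilizer shows that as $\sigma$ ranges over $G$, the element $\sigma^{-1}(e)$ hits every $e'\in E$ exactly $|G|/|E|$ times. Therefore $\bar p_e=\frac{1}{|E|}\sum_{e'}p_{e'}=1/|E|$, that is, $\bar p=u$.

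Concavity of $h_{M,K}$ (Theorem~\ref{thm:matroid-polynomial-concavity}), applied via Jensen's inequality to this finite convex combination, then gives
\begin{equation*}
  h_{M,K}(u)\ge\frac{1}{|G|}\sum_{\sigma\in G}h_{M,K}(\sigma p)=h_{M,K}(p).
\end{equation*}
Both sides are nonnegative, so raising to the $K$-th power and using \eqref{eq:F-K-factorial-f} yields $F_{M,K}(p)\le F_{M,K}(u)$.

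There is no serious obstacle once Theorem~\ref{thm:matroid-polynomial-concavity} is available; all the depth sits in complete log-concavity. The only step needing care is the counting argument that the group average is exactly uniform, and it relies on transitivity through orbit--stabilizer. Quasi-concavity alone would also suffice here. Every $\sigma p$ lies in the convex superlevel set $\{f_{M,K}\ge f_{M,K}(p)\}$, so their average $u$ lies in it too. I would nonetheless use the concave $h_{M,K}$ because the argument is cleaner.
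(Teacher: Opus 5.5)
Your proposal is correct and follows the paper's proof essentially step for step: invariance of $f_{M,K}$ under $\Aut(M)$, averaging $p$ over the group to obtain $u$, Jensen's inequality for the concave $h_{M,K}=f_{M,K}^{1/K}$, and raising to the $K$-th power. Your orbit--stabilizer count makes explicit the step the paper only asserts, namely that the group average is uniform. Your remark that quasi-concavity of $f_{M,K}$ alone would suffice is also valid.
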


\begin{proof}
Let $G=\Aut(M)$ act on \(E\). This induces an action on
\(\Delta(E)\) by
\begin{equation}
\label{eq:group-action-on-distributions}
  (gp)_e=p_{g^{-1}e},\qquad g\in G,\ e\in E.
\end{equation}
Since the action of \(G\) on \(E\) is transitive, averaging any
\(p\in\Delta(E)\) over \(G\) gives the uniform distribution,
\begin{equation}\label{eq:group-average-is-uniform}
  \frac{1}{|G|}\sum_{g\in G}g p=u.
\end{equation}
Further, since automorphisms preserve independent sets, we have
\begin{equation}
\label{eq:invariance-f}
  f_{M,K}(g p)=f_{M,K}(p),
  \qquad g\in G.
\end{equation}
By concavity of $h_{M,K}=f_{M,K}^{1/K}$,
\begin{subequations}
\begin{align}
  h_{M,K}(u)
  &=h_{M,K}\left(\frac{1}{|G|}\sum_{g\in G}g p\right) \label{eq:concavity-average-start}\\
  &\ge \frac{1}{|G|}\sum_{g\in G}h_{M,K}(g p) \label{eq:concavity-average}\\
  &=\frac{1}{|G|}\sum_{g\in G}h_{M,K}(p) \label{eq:invariance-average}\\
  &=h_{M,K}(p). \label{eq:average-finish}
\end{align}
\end{subequations}
Raising to the $K$-th power and using \eqref{eq:F-K-factorial-f} gives \eqref{eq:transitive-optimality-F}.
\end{proof}

\begin{remark}[Non-transitive matroids]\label{rem:nontransitive-orbits}
The same proof shows that, for an arbitrary subgroup $G\le \Aut(M)$, there is a maximizer of $F_{M,K}$ that is constant on the $G$-orbits of $E$. Replacing $p$ by its group average cannot decrease $h_{M,K}$, hence cannot decrease $F_{M,K}$.
\myqed
\end{remark}

Notice that we have not claimed uniqueness or stability in Theorem~\ref{thm:transitive-matroid}. Indeed, in general, a transitive matroid may have many maximizers.

\begin{example}[A transitive matroid with nonunique maximizers]\label{ex:parallel-classes-nonunique}
Let $E=A\sqcup B$, where
\begin{equation}\label{eq:parallel-class-sizes}
  A=\{a_1,\ldots,a_m\},
  \qquad
  B=\{b_1,\ldots,b_m\}.
\end{equation}
Consider the rank-$2$ matroid $M$ in which the elements of $A$ are mutually parallel, the elements of $B$ are mutually parallel, and an element of $A$ is independent from an element of $B$.  Equivalently, the independent sets of size $2$ are precisely
\begin{equation}\label{eq:parallel-example-independent-pairs}
  \big\{\{a_i,b_j\} \colon 1\le i,j\le m\big\}.
\end{equation}
The automorphism group contains arbitrary permutations inside $A$ and $B$ and also the involution interchanging $A$ and $B$. Therefore, it acts transitively on $E$.

For $K=2$, if
\begin{equation}\label{eq:parallel-example-masses}
  p(A)\coloneq\sum_{a\in A}p_a,
  \qquad
  p(B)\coloneq\sum_{b\in B}p_b,
\end{equation}
then
\begin{equation}\label{eq:parallel-example-F}
  F_{M,2}(p)=2p(A)p(B).
\end{equation}
Thus $F_{M,2}$ is maximized when
\begin{equation}\label{eq:parallel-example-maximizers}
  p(A)=p(B)=\frac{1}{2}.
\end{equation}
The distribution inside each parallel class is arbitrary.  Therefore the uniform distribution is a maximizer but is not unique.  Consequently, no inequality of the form
\begin{equation}\label{eq:parallel-example-no-stability}
  F_{M,2}(u)-F_{M,2}(p)\ge c\|p-u\|_2^2
\end{equation}
can hold for all $p\in\Delta(E)$ with any $c>0$.
\myqed
\end{example}

The following elementary criterion will illustrate why the projective-geometry case analyzed in the following section is stronger.

\begin{proposition}[Local quadratic stability implies global quadratic stability]
\label{prop:local-to-global-stability}
Let $M$ be a matroid on a finite ground set $E$, let $1\le K\le \rank(M)$, and suppose that $\Aut(M)$ acts transitively on $E$.  Let $u$ be the uniform distribution on $E$.  Assume that there exist $\varepsilon>0$ and $c_{\mathrm{loc}}>0$ such that
\begin{equation}\label{eq:general-local-quadratic-stability}
  F_{M,K}(u)-F_{M,K}(p)\ge c_{\mathrm{loc}}\|p-u\|_2^2
\end{equation}
whenever $p\in\Delta(E)$ and $\|p-u\|_2\le\varepsilon$.  Then $u$ is the unique maximizer of $F_{M,K}$ on $\Delta(E)$, and there exists $c>0$ such that
\begin{equation}\label{eq:general-global-quadratic-stability}
  F_{M,K}(u)-F_{M,K}(p)\ge c\|p-u\|_2^2,
  \qquad p\in\Delta(E).
\end{equation}
\end{proposition}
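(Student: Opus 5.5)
The plan is to combine the concavity of $h_{M,K}=f_{M,K}^{1/K}$ from Theorem~\ref{thm:matroid-polynomial-concavity} with the local hypothesis \eqref{eq:general-local-quadratic-stability}, using a radial argument along segments from $u$. Fix $p\in\Delta(E)$ with $\|p-u\|_2>\varepsilon$. Put $t\coloneq\varepsilon/\|p-u\|_2\in(0,1)$ and $p_t\coloneq u+t(p-u)\in\Delta(E)$. The simplex is convex, so $p_t$ lies in it, and $\|p_t-u\|_2=\varepsilon$. The local hypothesis then applies to $p_t$.

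\emph{Concavity step.} Concavity of $h\coloneq h_{M,K}$ on the segment gives
\[
  h(p_t)\ge (1-t)h(u)+t\,h(p),
\]
which rearranges to
\[
  h(u)-h(p)\ge \tfrac1t\bigl(h(u)-h(p_t)\bigr).
\]

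\emph{Conversion between $F$ and $h$.} By \eqref{eq:F-K-factorial-f} we have $F=K!\,h^K$. For $0\le b\le a$ the elementary bounds
\[
  K b^{K-1}(a-b)\le a^K-b^K\le K a^{K-1}(a-b)
\]
hold. Set $a=h(u)$. This is positive because $K\le\rank(M)$ forces some independent $K$-set to exist, so $f_{M,K}(u)>0$. Also $h(p),h(p_t)\le h(u)$ by Theorem~\ref{thm:transitive-matroid}. Then:
\begin{itemize}
\item From the local bound, $h(u)-h(p_t)\ge c_{\mathrm{loc}}\varepsilon^2/(K!\,K\,h(u)^{K-1})$.
\item Combining with the concavity step, $h(u)-h(p)\ge \|p-u\|_2\,c_{\mathrm{loc}}\varepsilon/(K!\,K\,h(u)^{K-1})$. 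This is a \emph{linear} lower bound in $\|p-u\|_2$.
\item Converting back to $F$ needs care, because $h(p)$ may be $0$ and the lower bound $Kb^{K-1}(a-b)$ then degenerates. Instead use $a^K-b^K\ge a^{K-1}(a-b)$, valid since $a^K-b^K=(a-b)\sum a^{i}b^{K-1-i}$ and this sum contains the term $a^{K-1}$.
\end{itemize}
The result is
\[
  F(u)-F(p)\ge K!\,h(u)^{K-1}\bigl(h(u)-h(p)\bigr)\ge \frac{c_{\mathrm{loc}}\,\varepsilon}{K}\,\|p-u\|_2 .
\]

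\emph{From linear to quadratic.} The diameter of the simplex gives $\|p-u\|_2\le D\coloneq\sqrt2$. Hence $\|p-u\|_2\ge \|p-u\|_2^2/D$, and for $\|p-u\|_2>\varepsilon$ we obtain $F(u)-F(p)\ge \frac{c_{\mathrm{loc}}\varepsilon}{K\sqrt2}\|p-u\|_2^2$. Taking
\[
  c=\min\bigl(c_{\mathrm{loc}},\,c_{\mathrm{loc}}\varepsilon/(K\sqrt2)\bigr)
\]
covers both regions, $\|p-u\|_2\le\varepsilon$ and $\|p-u\|_2>\varepsilon$. Uniqueness of the maximizer follows immediately, since $c>0$ makes $F(p)<F(u)$ for every $p\neq u$.

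The main obstacle is the passage between $F$ and $h$ when $h(p)$ can vanish. The bound $a^K-b^K\ge a^{K-1}(a-b)$ handles this uniformly, so I expect no real difficulty. The rest is bookkeeping of constants.
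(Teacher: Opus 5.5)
Your proof is correct, but it obtains the global constant by a different route from the paper's.

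\textbf{The paper's route.} It first proves uniqueness: concavity of $h_{M,K}$ forces $h_{M,K}$ to be constant on the segment between $u$ and any second maximizer, which contradicts the local estimate near $u$. It then gets global stability by a soft compactness argument. The ratio $R(p)=(F_{M,K}(u)-F_{M,K}(p))/\|p-u\|_2^2$ is continuous on the compact set $\{p\in\Delta(E)\colon \|p-u\|_2\ge\varepsilon\}$ and, by uniqueness, positive there, so it has a positive minimum. The resulting constant is not explicit.

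\textbf{Your route.} You use concavity quantitatively along the ray from $u$ through $p$: the secant slope $\bigl(h_{M,K}(u)-h_{M,K}(u+s(p-u))\bigr)/s$ is nondecreasing in $s$. This propagates the local deficit at radius $\varepsilon$ to a \emph{linear} deficit $F_{M,K}(u)-F_{M,K}(p)\ge (c_{\mathrm{loc}}\varepsilon/K)\|p-u\|_2$ outside the $\varepsilon$-ball. Uniqueness then comes out as a by-product rather than as a prerequisite.

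\textbf{Comparison.} Your approach costs some bookkeeping in passing between $F_{M,K}=K!\,h_{M,K}^K$ and $h_{M,K}$. You handle this correctly, including the positivity of $h_{M,K}(u)$ and the possibility $h_{M,K}(p)=0$. In return you get an explicit constant $c=\min\bigl(c_{\mathrm{loc}},c_{\mathrm{loc}}\varepsilon/(K\sqrt2)\bigr)$ and a stronger decay rate away from $u$. It also shows that the non-explicitness of $c_{N,q,K}$ in Theorem~\ref{thm:global-stability-PG} comes only from the unquantified Taylor remainder that defines $\varepsilon$ and $c_{\mathrm{loc}}$, not from the local-to-global step. Bounding that cubic remainder explicitly would therefore make $c_{N,q,K}$ explicit. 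The paper's argument is shorter and needs no comparison between $a^K-b^K$ and $a-b$.
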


\begin{proof}
By Theorems \ref{thm:matroid-polynomial-concavity} and \ref{thm:transitive-matroid}, $h_{M,K}(p)$ is concave on $\Delta(E)$ with a global maximizer $u$.

To show uniqueness, suppose that $p$ is a global maximizer, i.e., that $p$ and $u$ both maximize $h_{M,K}$.  Then, for all $t\in[0,1]$,
\begin{align}
\label{eq:general-uniqueness-concavity}
  h_{M,K}((1-t)p+tu)
  \ge (1-t)h_{M,K}(p)+t h_{M,K}(u)
  =h_{M,K}(u).
\end{align}
Since $h_{M,K}(u)$ is the global maximum, equality in \eqref{eq:general-uniqueness-concavity} holds throughout the segment from $p$ to $u$.  If $p\ne u$, then points on this segment sufficiently close to, but distinct from, $u$ contradict the local estimate \eqref{eq:general-local-quadratic-stability}.  Hence $p=u$.

For global stability, define
\begin{equation}\label{eq:general-R-ratio}
  R(p)\coloneq\frac{F_{M,K}(u)-F_{M,K}(p)}{\|p-u\|_2^2},
  \qquad p\ne u.
\end{equation}
The local estimate gives $R(p)\ge c_{\mathrm{loc}}$ near $u$.  On the compact set
\begin{equation}\label{eq:general-away-set}
  A_\varepsilon\coloneq\{p\in\Delta(E) \colon \|p-u\|_2\ge\varepsilon\},
\end{equation}
the numerator is strictly positive by uniqueness.  Therefore $R$ attains a positive minimum on $A_\varepsilon$.  Taking $c$ to be the smaller of this minimum and $c_{\mathrm{loc}}$ gives \eqref{eq:general-global-quadratic-stability}.
\end{proof}

\section{Projective geometries and random matrices over finite fields}
\label{sec:finite-fields}

Throughout this section let
\begin{equation}
\label{eq:E-PG-section}
  E=\PG(N-1,q),
  \qquad
  m=|E|=\frac{q^N-1}{q-1},
  \qquad
  u_e=\frac{1}{m}.
\end{equation}
Thus $E$ is the set of one-dimensional subspaces of $\Fq^N$. A distribution $p\in \Delta(E)$ induces an i.i.d.\ row model by drawing projective row classes $X_1,\ldots,X_K$ independently from $p$ and choosing arbitrary nonzero representatives in $\Fq^N$.

\begin{corollary}[Full-rank matrices over finite fields]
\label{cor:finite-field-full-rank}
Let $1\le K\le N$, and let $p$ be any probability distribution on $\PG(N-1,q)$. Then the probability that $K$ i.i.d.\ projective row classes drawn from $p$ have full row rank is maximized by the uniform distribution $u$ on $\PG(N-1,q)$.
\end{corollary}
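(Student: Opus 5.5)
The plan is to deduce the corollary directly from Theorem~\ref{thm:transitive-matroid}, applied to the projective geometry matroid $M=\PG(N-1,q)$ on $E$. In this matroid a set of projective points is independent exactly when any (equivalently, every) choice of nonzero representatives is linearly independent in $\Fq^N$. The rank of $M$ is $N$, so the hypothesis $1\le K\le N$ is precisely $1\le K\le\rank(M)$.

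First, identify the event in the corollary with the event in \eqref{eq:independence-probability-intro}. If the $K\times N$ matrix with rows $v_1,\ldots,v_K$ (representatives of $X_1,\ldots,X_K$) has full row rank $K$, then the $v_i$ are linearly independent. In particular no two are scalar multiples, so the classes $X_i$ are distinct, and $\{X_1,\ldots,X_K\}$ is an independent $K$-set of $M$. Conversely, if the $X_i$ are distinct and form an independent set, then any representatives are linearly independent, so the matrix has rank $K$. Neither direction depends on which representatives are chosen. Hence the full-rank probability equals $F_{M,K}(p)$.

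Second, verify transitivity of $\Aut(M)$ on $E$. Every $A\in GL_N(\Fq)$ maps one-dimensional subspaces to one-dimensional subspaces and preserves linear independence, so it induces an automorphism of $M$. Given two points $\langle v\rangle,\langle w\rangle$, extend $v$ and $w$ to bases of $\Fq^N$. The change-of-basis matrix sending the first basis to the second lies in $GL_N(\Fq)$ and maps $\langle v\rangle$ to $\langle w\rangle$. Thus the induced subgroup, and hence $\Aut(M)$, acts transitively on $E$.

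Theorem~\ref{thm:transitive-matroid} now gives $F_{M,K}(p)\le F_{M,K}(u)$ for every $p\in\Delta(E)$, which is the claim. No step is a real obstacle. The only point needing care is the first step: the event in $F_{M,K}$ requires the samples to be distinct, and this is automatically forced by full row rank, since repeated projective classes give proportional rows.
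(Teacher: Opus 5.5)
Your proposal is correct and follows the paper's proof: apply Theorem~\ref{thm:transitive-matroid} to $\PG(N-1,q)$, using that its rank is $N$ and that the linear group acts transitively on projective points. You also spell out two steps the paper leaves implicit: that full row rank coincides with the event in $F_{M,K}$ (distinctness is forced), and an explicit basis-extension argument for transitivity.
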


\begin{proof}
The projective geometry matroid $\PG(N-1,q)$ has rank $N$. Its independent sets are precisely the projective point sets represented by linearly independent vectors in $\Fq^N$. The group $\operatorname{PGL}(N,q)$ acts transitively on projective points. Thus Theorem~\ref{thm:transitive-matroid} applies.
\end{proof}

The optimal value in this case is explicit.
Under the uniform distribution, after $j$ independent projective points have been drawn, their span contains
\begin{equation}\label{eq:projective-span-size}
  [j]_q\coloneq\frac{q^j-1}{q-1}
\end{equation}
projective points. Hence
\begin{equation}\label{eq:finite-field-optimal-value-projective}
  F_{\PG(N-1,q),K}(u)=\prod_{j=0}^{K-1}\left(1-\frac{[j]_q}{m}\right),
\end{equation}
where $[0]_q=0$ and $m$ is given in \eqref{eq:E-PG-section}. Equivalently,
\begin{equation}\label{eq:finite-field-optimal-value-vector}
  F_{\PG(N-1,q),K}(u)=\prod_{j=0}^{K-1}\frac{q^N-q^j}{q^N-1}.
\end{equation}

\begin{remark}[Distributions on nonzero vectors]\label{rem:nonzero-vector-distributions}
If $P$ is a distribution on $\Fq^N\setminus\{0\}$, only its pushforward to projective space matters for rank. For a projective point $L\in \PG(N-1,q)$, define
\begin{equation}\label{eq:projective-pushforward}
  p_L\coloneq\sum_{v\in L\setminus\{0\}}P(v).
\end{equation}
Then the full-rank probability for rows drawn from $P$ equals $F_{\PG(N-1,q),K}(p)$. Consequently, any distribution $P$ whose projective pushforward is uniform is globally optimal. The uniform distribution on $\Fq^N\setminus\{0\}$ is one such distribution, but it is not the only one when $q>2$.
\myqed
\end{remark}

We next obtain stability estimates showing that uniform projective sampling is not merely optimal, but robustly optimal: any distribution whose projective pushforward is far from uniform incurs a quantitative loss in full-rank probability. Equivalently, near-optimal full-rank probability forces the sampling distribution to be close to uniform, up to the unavoidable freedom of redistributing mass within scalar classes.

\begin{proposition}[Exact equality and stability for $K=2$]\label{prop:K2-stability}
For $E=\PG(N-1,q)$ and $K=2$, one has
\begin{equation}\label{eq:K2-formula}
  F_{\PG(N-1,q),2}(p)=1-\sum_{e\in E}p_e^2.
\end{equation}
Consequently,
\begin{equation}\label{eq:K2-exact-stability}
  F_{\PG(N-1,q),2}(u)-F_{\PG(N-1,q),2}(p)=\sum_{e\in E}(p_e-u_e)^2.
\end{equation}
In particular, equality holds if and only if $p=u$.
\end{proposition}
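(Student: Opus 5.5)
The plan is to compute $F_{\PG(N-1,q),2}$ directly from its definition \eqref{eq:independence-probability-intro}, and then expand around $u$.

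\textbf{Step 1: the formula \eqref{eq:K2-formula}.} The key structural fact is that in $\PG(N-1,q)$ any two distinct projective points are represented by linearly independent vectors. Indeed, distinct one-dimensional subspaces of $\Fq^N$ intersect only in $0$, so their representatives are not scalar multiples. In matroid language, the matroid is simple: it has no loops and no parallel pairs. Hence every $2$-subset of $E$ is independent. The event in \eqref{eq:independence-probability-intro} with $K=2$ therefore reduces to $X_1\ne X_2$, and
\begin{equation*}
  F_{\PG(N-1,q),2}(p)=1-\Prob(X_1=X_2)=1-\sum_{e\in E}p_e^2 .
\end{equation*}
Equivalently, via \eqref{eq:independence-polynomial-relation-intro}, we have $2\sum_{\{e,e'\}}p_ep_{e'}=\bigl(\sum_e p_e\bigr)^2-\sum_e p_e^2$, which gives the same expression.

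\textbf{Step 2: the exact stability identity \eqref{eq:K2-exact-stability}.} Write $p=u+d$. Since both $p$ and $u$ lie in $\Delta(E)$, we have $\sum_e d_e=0$. Expanding the square,
\begin{equation*}
  \sum_e p_e^2=\sum_e u_e^2+\frac{2}{m}\sum_e d_e+\sum_e d_e^2=\sum_e u_e^2+\sum_e (p_e-u_e)^2 ,
\end{equation*}
where the middle term vanishes because $\sum_e d_e=0$. Subtracting the two instances of \eqref{eq:K2-formula} then gives \eqref{eq:K2-exact-stability}.

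\textbf{Step 3: the equality case.} The right-hand side of \eqref{eq:K2-exact-stability} is a sum of squares. It vanishes if and only if $p_e=u_e$ for every $e\in E$. Hence equality $F(p)=F(u)$ holds exactly when $p=u$.

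There is no real obstacle here. The only point that needs care is Step 1, namely justifying that simplicity of the projective geometry makes every pair of distinct points independent. This is precisely what fails in Example~\ref{ex:parallel-classes-nonunique}, where parallel elements make some distinct pairs dependent.
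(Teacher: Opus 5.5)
Your proposal is correct and follows the paper's proof. Both arguments use simplicity of $\PG(N-1,q)$ to reduce the event to $X_1\ne X_2$, then use $\sum_e p_e=1$ to get $\sum_e(p_e-u_e)^2=\sum_e p_e^2-1/m$, and compare with $F_{\PG(N-1,q),2}(u)=1-1/m$.
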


\begin{proof}
In a simple matroid, two sampled projective points are independent if and only if they are distinct. Therefore
\begin{equation}
\label{eq:K2-distinct-probability}
  F_{\PG(N-1,q),2}(p)=1-\Prob(X_1=X_2)=1-\sum_{e\in E}p_e^2.
\end{equation}
Since $\sum_e p_e=\sum_e u_e=1$,
\begin{equation}
\label{eq:norm-expansion}
  \sum_{e\in E}(p_e-u_e)^2=\sum_{e\in E}p_e^2-\frac{1}{m}.
\end{equation}
Also
\begin{equation}
\label{eq:K2-uniform-value}
  F_{\PG(N-1,q),2}(u)=1-\frac{1}{m}.
\end{equation}
Combining \eqref{eq:K2-distinct-probability}, \eqref{eq:norm-expansion}, and \eqref{eq:K2-uniform-value} gives \eqref{eq:K2-exact-stability}.
\end{proof}

The preceding exact stability identity has the following higher-rank
analogue, with a possibly non-explicit stability constant.

\begin{theorem}[Uniqueness and global quadratic stability]\label{thm:global-stability-PG}
Let $E=\PG(N-1,q)$ and let $2\le K\le N$. There exists a constant
 $ c_{N,q,K}>0 $
such that, for every $p\in\Delta(E)$,
\begin{equation}\label{eq:global-stability-inequality}
  F_{\PG(N-1,q),K}(u)-F_{\PG(N-1,q),K}(p)
  \ge c_{N,q,K}\|p-u\|_2^2.
\end{equation}
Consequently, $u$ is the unique maximizer of $F_{\PG(N-1,q),K}$ on $\Delta(E)$.
\end{theorem}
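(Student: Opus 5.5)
Since $\PG(N-1,q)$ is transitive under $\operatorname{PGL}(N,q)$, Proposition~\ref{prop:local-to-global-stability} applies. It therefore suffices to prove a \emph{local} quadratic estimate $F(u)-F(p)\ge c_{\mathrm{loc}}\|p-u\|_2^2$ for $p\in\Delta(E)$ near $u$. The proposition then delivers both uniqueness and the global constant $c_{N,q,K}$.

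For the local estimate I will Taylor-expand $f=f_{\PG(N-1,q),K}$ at $u$ along tangent directions. Write $p=u+v$ with $\sum_e v_e=0$. Since $f$ is a polynomial,
\begin{equation*}
  f(u+v)=f(u)+\nabla f(u)\cdot v+\tfrac12 v^{\top}\nabla^2 f(u)\,v+O(\|v\|^3).
\end{equation*}
By transitivity, $\partial_e f(u)$ is independent of $e$, so the linear term vanishes on $\sum_e v_e=0$. The Hessian entries $\partial_e\partial_{e'}f(u)$ equal $u$-weighted counts of independent $(K-2)$-sets $T$ such that $T\cup\{e,e'\}$ is independent. This count is $0$ if $e=e'$. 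If $e\ne e'$ it depends only on the fact that $e,e'$ are distinct, and the count is the same constant $\beta>0$ for every pair, because $\operatorname{PGL}(N,q)$ is $2$-transitive on projective points (any two distinct points are independent). So $\nabla^2 f(u)=\beta(J-I)$, where $J$ is the all-ones matrix. On the hyperplane $\sum_e v_e=0$ we have $Jv=0$, hence
\begin{equation*}
  v^{\top}\nabla^2 f(u)\,v=-\beta\|v\|_2^2 .
\end{equation*}
This gives $F(u)-F(u+v)=K!\bigl(\tfrac{\beta}{2}\|v\|^2+O(\|v\|^3)\bigr)\ge c_{\mathrm{loc}}\|v\|^2$ for $\|v\|\le\varepsilon$ small. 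Here $\beta>0$ requires $K\le N$: for $K\ge2$ there must exist independent $(K-2)$-sets extending a given independent pair. This holds since the rank is $N\ge K$, so $\beta$ is $(K-2)!$ times the number of ordered extensions, times $m^{-(K-2)}$.

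The key step, and the one needing care, is the $2$-transitivity claim, which makes the Hessian a multiple of $J-I$. It fails for general transitive matroids: in Example~\ref{ex:parallel-classes-nonunique}, parallel pairs have Hessian entry $0$ while cross pairs have a positive entry, which produces degenerate directions. Once this is established, the cubic remainder is controlled uniformly by the bounded third derivatives of a fixed polynomial on the simplex. Proposition~\ref{prop:local-to-global-stability} then finishes the proof. As a consistency check, for $K=2$ the formula gives $\beta=2/2!\cdot$ (appropriately normalized), matching Proposition~\ref{prop:K2-stability}.
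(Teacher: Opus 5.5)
Your proposal is correct and follows the paper's proof essentially step for step. You reduce to a local estimate via Proposition~\ref{prop:local-to-global-stability}, use $2$-transitivity of $\operatorname{PGL}(N,q)$ and multi-affineness to get $\nabla^2 f(u)=\beta(J-I)$ with $\beta=B_2m^{-(K-2)}>0$, and hence $v^{\top}\nabla^2 f(u)\,v=-\beta\|v\|_2^2$ on $\sum_e v_e=0$; your invariance argument for the vanishing linear term is an equally valid substitute for the paper's appeal to $u$ being an interior global maximizer.

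There are two small slips, neither of which affects the argument. First, $\beta$ is the number of ordered extensions divided by (not multiplied by) $(K-2)!$, times $m^{-(K-2)}$. Second, your $K=2$ consistency check is garbled: there one simply has $\beta=1$, which gives $F(u)-F(p)=\|p-u\|_2^2$ exactly as in Proposition~\ref{prop:K2-stability}.
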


\begin{proof}
We demonstrate local quadratic stability at $u$. Uniqueness and global quadratic stability will then follow from Proposition~\ref{prop:local-to-global-stability}.
Denote
\begin{equation}\label{eq:stability-f-definition}
  f=f_{\PG(N-1,q),K},
  \qquad
  F=F_{\PG(N-1,q),K}=K!f.
\end{equation}
Write
\begin{equation}\label{eq:v-definition}
  v\coloneq p-u,
  \qquad
  \sum_{e\in E}v_e=0,
\end{equation}
and let
\begin{equation}
\label{eq:tangent-space}
  T\coloneq\left\{v\in \R^E \colon \sum_{e\in E}v_e=0\right\}.
\end{equation}
The point $u$ is a critical point of $F$ restricted to the affine hyperplane $\sum_e p_e=1$, because it is fixed by the transitive action of $\operatorname{PGL}(N,q)$ and is a global maximizer by Corollary~\ref{cor:finite-field-full-rank}.

Since $f$ is multi-affine,
\begin{equation}\label{eq:diagonal-second-derivatives}
  \frac{\partial^2 f}{\partial x_e^2}(u)=0.
\end{equation}
For distinct projective points $e\ne e'$, every pair $\{e,e'\}$ is independent, and the projective linear group is transitive on ordered pairs of distinct projective points. Hence the mixed second derivative is independent of the pair. Let
\begin{equation}\label{eq:B2-definition}
  B_2\coloneq\#\left\{S\subseteq E\colon\ |S|=K,\ S\text{ independent},\ e,e'\in S\right\},
\end{equation}
where $e\ne e'$ is fixed. Then
\begin{equation}\label{eq:mixed-second-derivative}
  \frac{\partial^2 f}{\partial x_e\partial x_{e'}}(u)=B_2 m^{-(K-2)}
  \qquad (e\ne e').
\end{equation}
Moreover $B_2>0$ because $K\le N$. Thus, for $v\in T$,
\begin{subequations}
\begin{align}
  v^T \nabla^2 f(u)v
  &=B_2m^{-(K-2)}\sum_{e,e'\in E,\, e\ne e'}v_ev_{e'} \label{eq:hessian-quadratic-start}\\
  &=B_2m^{-(K-2)}\left(\left(\sum_{e\in E}v_e\right)^2-\sum_{e\in E}v_e^2\right) \label{eq:hessian-quadratic-middle}\\
  &=-B_2m^{-(K-2)}\|v\|_2^2. \label{eq:hessian-quadratic-end}
\end{align}
\end{subequations}
Consequently,
\begin{equation}\label{eq:F-hessian-negative}
  v^T\nabla^2 F(u)v=-K!B_2m^{-(K-2)}\|v\|_2^2.
\end{equation}
Taylor's theorem on the affine hyperplane $\sum_e p_e=1$ gives
\begin{equation}\label{eq:taylor-local-stability}
  F(u+v)=F(u)-\frac{K!B_2m^{-(K-2)}}{2}\|v\|_2^2+O(\|v\|_2^3).
\end{equation}
Therefore there exist $\varepsilon>0$ and $c_{\mathrm{loc}}>0$ such that
\begin{equation}\label{eq:local-stability-inequality}
  F(u)-F(p)\ge c_{\mathrm{loc}}\|p-u\|_2^2
\end{equation}
whenever $\|p-u\|_2\le \varepsilon$.
\end{proof}

The number $B_2$ in \eqref{eq:B2-definition} can be written explicitly. With $[j]_q$ as in \eqref{eq:projective-span-size},
\begin{equation}\label{eq:B2-explicit}
  B_2=\frac{1}{(K-2)!}\prod_{j=2}^{K-1}\left(m-[j]_q\right),
\end{equation}
with the empty product interpreted as $1$ when $K=2$.

\begin{remark}[Vector-level equality]\label{rem:vector-level-equality}
The uniqueness statement in Theorem~\ref{thm:global-stability-PG} holds at the projective level. For distributions on $\Fq^N\setminus\{0\}$, equality is nonunique: all distributions whose projective pushforward \eqref{eq:projective-pushforward} is uniform are optimal. Equivalently, one must have
\begin{equation}\label{eq:vector-level-equality-condition}
  \sum_{v\in L\setminus\{0\}}P(v)=\frac{1}{|\PG(N-1,q)|}=\frac{q-1}{q^N-1}
\end{equation}
for every projective point $L\in\PG(N-1,q)$.
\myqed
\end{remark}

\section{Further questions}\label{sec:further-questions}

We have studied a simple extremality principle for transitive matroids.
Several natural refinements remain open.
For example, whether the uniform distribution also maximizes
\begin{equation}
\label{eq:rank-threshold-question}
\Prob(\rank(X_1,\ldots,X_L)\ge r)
\end{equation}
or
\begin{equation}
\label{eq:expected-rank-question}
\E\,\rank(X_1,\ldots,X_L)
\end{equation}
for arbitrary $L$ and $r$.

In the case of random matrices, if entries rather than rows are sampled i.i.d.\ from a scalar distribution on $\Fq$, the induced row distribution is constrained to be a product measure. The projective-uniform optimum is typically not available in this family, leading to a distinct optimization problem.

\end{document}